\newtheorem {theorem} {Theorem}
\newtheorem {definition}{Definition}
\newtheorem {corollary}{Corollary}
\newtheorem {example} {Example}
\newtheorem {remark}{Remark}
\begin{document}


\title[Foliations admitting an invariant algebraic set]{On the holomorphic foliations admitting a common invariant algebraic set}

\author{Guangfeng Dong}

\address{Department of Mathematics, Jinan University,
	Guangzhou 510632, Guangdong province, China}


\email{donggf@jnu.edu.cn}
\author{Chujun Lu}

\address{Department of Mathematics, Jinan University,
	Guangzhou 510632, Guangdong province, China}
\email{1761516631@qq.com}


\begin{abstract}
In this paper, we study the holomorphic foliations admitting a common invariant algebraic set $C$ defined by a polynomial $f$ in $ \mathbb{K}[x_1,x_2,...,x_n]$ 
over any characteristic $0$ subfield $\mathbb{K}\subseteq\mathbb{C}$. 
 For the $\mathbb{K}[x_1,x_2,...,x_n]$-module $V_f$ of vector fields generating foliations that admit $C$ as an invariant set, we provide several conditions under which the module $V_f$ can be freely generated by a minimal generating set.
 In particular,  
when $n=2$  and $f$ is  a weakly tame polynomial, we show that the $\mathbb{K}[x,y]$-module $V_f$  is freely generated by two polynomial vector fields, one of which is the Hamiltonian vector field induced by $f$,  if and only if,  $f$ belongs to the Jacobian ideal $\langle f_x, f_y\rangle$ in $\mathbb{K}[x,y]$.  
Our proof employs a purely elementary method.
\end{abstract}


\keywords{invariant algebraic curve, vector fields, polynomial 1-forms, holomorphic foliation}


\subjclass[2020]{Primary: 37F75;  Secondary: 32M25;}


\maketitle

\section{Introduction and the main results}

The study of algebraic solutions to polynomial differential equations in the complex plane traces its intellectual origins to the seminal contributions of  19th to early 20th century mathematicians, notably including  Pfaff, Poincar{\'e}, Darboux, and Painlev{\'e}. In modern times, this line of inquiry aligns with the study of invariant algebraic sets in holomorphic foliations within complex projective spaces.

In \cite{Darb}, G. Darboux established a criterion for the existence of first integrals in polynomial differential equations, grounding his theory in the requisite abundance of invariant algebraic curves. J.P. Jouanolou’s work in \cite{Joua} improved and generalized Darboux’s theory of integrability characterizing rational first integrals.

In the complex projective plane $\mathbb{CP}^2$, when the invariant curves exist, 
H. Poincar{\'e} posed the following problem:
is it possible to bound the degree of an invariant curve in terms of the degree of the polynomial foliation?
In general, the answer is negative. However, by imposing conditions on the singularities, the answer becomes positive (see, e.g.,  \cite{Carn,Cer-LN} for some degree bounds). The complete characterization of the degree of invariant curves   remains largely open. For a review of this question and recent developments, see \cite{Correa}.

Invariant algebraic curves are also closely related to another well-known open problem: the second part of Hilbert's 16th problem. An important version of this problem involves studying the number and distribution of algebraic limit cycles for polynomial vector fields in the real plane. An algebraic limit cycle corresponds to a closed branch (in the real plane) of an invariant algebraic curve of the system. Relevant literature can be referred to \cite{A-CFL,GasGia,LlRS,Llb-Zhang,Zhang} and references therein.

In this paper we consider a rather fundamental problem which concerns the characterization of the algebraic structure of the space of polynomial foliations admitting a fixed invariant algebraic curve.
Let $C\subset \mathbb{C}^2$ be an affine algebraic curve  defined by $f(x,y)=0$, where $f(x,y) $ is a polynomial in $ \mathbb{C}[x,y]$. Recall that $C$ is an invariant curve of the foliation generated  by a vector field  
\(\mathcal{X} :=P(x,y)\frac{\partial}{\partial x}+Q(x,y)\frac{\partial}{\partial y}\)  
 (or equivalently, by a polynomial 1-form \(\omega:= Q(x,y) \mathrm{d}x -P(x,y)\mathrm{d}y\)) if and only if there exists a polynomial $k(x,y)\in \mathbb{C}[x,y]$ satisfying the equality
\begin{equation}\label{eq1}
	P f_x + Q f_y = k f,
\end{equation}
where  $f_x=\frac{\partial f}{\partial x}$, $f_y=\frac{\partial f}{\partial y}$.
The polynomial $k(x,y)$ is called the cofactor associated with the invariant curve  $f(x,y)=0$  for the vector field $\mathcal{X}$.  
The set of all polynomial vector fields admitting  $C$ as an invariant curve
 constitutes a $\mathbb{C}[x,y]$-module, denoted by $V_f$. 
Within this module, there are three trivial elements 
\[\mathcal{X}_0 = -f_y\frac{\partial}{\partial x}+f_x\frac{\partial}{\partial y},\ \ \mathcal{X}_1=f \frac{\partial}{\partial x}, \ \ \mathcal{X}_2=f \frac{\partial}{\partial y},\] 
with corresponding cofactors 
\[k_0=0,\ \  k_1 =f_x,\ \ k_2=f_y,\] 
respectively. Clearly, $\mathcal{X}_0$ is a Hamiltonian vector field induced by $f$.
In the terminology of differential forms, the module $V_f$ is replaced by a  $\mathbb{C}[x,y]$-module $E_f=\{Q(x,y) \mathrm{d}x -P(x,y)\mathrm{d}y\}$, which is isomorphic to $V_f$.

Some characterizations of polynomial vector fields  possessing a given set of invariant algebraic curves  can be found in \cite{Llibre} and references therein. There are also results describing particular types of vector fields admitting a given invariant algebraic curve, such as Li\'enard systems, Kukles systems and others (see, e.g., \cite{Gine-Lli,Lli-Val}).

Within the aforementioned  framework, the field \(\mathbb{C}\) may be substituted with any characteristic $0$ subfield
 $\mathbb{K} \subseteq \mathbb{C}$. 
 A singularity $p=(x,y)\in \mathbb{K}^2$ on curve $C$ defined by $f\in \mathbb{K}[x,y]$ is a point satisfying $f_x(x,y)=f_y(x,y)=0$.  
We say a singularity of curve $C\subset \mathbb{K}^2$ is quasi-homogeneous, if it is quasi-homogeneous  on $ \mathbb{C}^2$ (see Definition \ref{def-quasi-s} below).

A natural question is to find a generating set of minimal cardinality for the
 \(\mathbb{K}[x,y]\)-module \(V_f\). This generating set generally depends 
 on the curve $C$ and the field $\mathbb{K}$.  
 Furthermore, it is interesting to study when $V_f$ is a free module and what its rank is.
 In \cite{Ca-Mo}, it is established that when  \( f\) is a weakly tame polynomial with at most quasi-homogeneous singularities, 
  the \(\mathbb{K}[x,y]\)-module \(V_f\) admits a generating set of cardinality at most $4$. Three elements of this generating set are the trivial vector fields  \(\mathcal{X}_0\), \(\ \mathcal{X}_1\) and \(\mathcal{X}_2\), while the unique non-trivial generator is constructed algorithmically.
Furthermore, the authors demonstrate that the 
\(\overline{\mathbb{K}}[x,y]\)-module 
\( V_f  \otimes_{\mathbb{K}}  \overline{\mathbb{K}} \) is free of  rank $2$ (which is minimal),
where \( \overline{\mathbb{K}}\) is the algebraic closure of \(\mathbb{K}\). 
Notably, explicit expressions for the minimal generating set remain unspecified in the general case.
In this paper, we will show that the above results can be strengthened in some aspects by the following theorem.

\begin{theorem}\label{th-main}
For any algebraic curve $C$ defined by a weakly tame polynomial $f(x,y) \in \mathbb{K}[x,y]$,  the $\mathbb{K}[x,y]$-module $V_f$ 	is freely generated by the Hamiltonian vector field $\mathcal{X}_0$ and an additional generator $\mathcal{X}_{*}$ if and only if  $f$ belongs to the Jacobian ideal $J(f):=\langle f_x, f_y\rangle$ in $\mathbb{K}[x,y]$.
\end{theorem}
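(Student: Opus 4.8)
The plan is to organize the entire argument around the \emph{cofactor map}. I would introduce the $\mathbb{K}[x,y]$-module homomorphism $\phi\colon V_f\to\mathbb{K}[x,y]$ sending a vector field $\mathcal{X}=P\frac{\partial}{\partial x}+Q\frac{\partial}{\partial y}$ to its cofactor $k$, determined by $Pf_x+Qf_y=kf$. Since $f\neq 0$ is not a zero divisor in the domain $\mathbb{K}[x,y]$, the cofactor is unique and $\phi$ is well defined and $\mathbb{K}[x,y]$-linear. The kernel of $\phi$ is the set of vector fields with $Pf_x+Qf_y=0$, i.e. the syzygy module of the pair $(f_x,f_y)$, and the image is the colon ideal $(J(f):f)=\{k: kf\in J(f)\}$, because $\mathcal{X}\in V_f$ holds precisely when $kf\in\langle f_x,f_y\rangle$. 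This produces the short exact sequence $0\to\ker\phi\to V_f\xrightarrow{\ \phi\ }(J(f):f)\to 0$, and I would reduce the whole statement to understanding its two outer terms.

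First I would pin down $\ker\phi$. Because $f$ is weakly tame it has isolated singularities, so the common zero set $V(f_x,f_y)$ is finite and hence $\gcd(f_x,f_y)=1$; equivalently $f_x,f_y$ form a regular sequence. Then the only syzygies of $(f_x,f_y)$ are the Koszul ones, which gives $\ker\phi=\mathbb{K}[x,y]\cdot\mathcal{X}_0$, a free module of rank one generated by the Hamiltonian field. I expect this to be the main technical obstacle: one must make precise exactly which consequence of weak tameness is invoked, verify that the syzygy module collapses to the single Koszul relation, and be careful to argue over $\mathbb{K}$ rather than $\overline{\mathbb{K}}$, using that the gcd of two polynomials is insensitive to field extension.

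For the reverse implication, assuming $f\in J(f)$, I would write $f=Af_x+Bf_y$ and set $\mathcal{X}_{*}=A\frac{\partial}{\partial x}+B\frac{\partial}{\partial y}$, which lies in $V_f$ with cofactor $1$. Then $\phi$ is surjective and $k\mapsto k\,\mathcal{X}_{*}$ is a section, so the sequence splits and $V_f=\ker\phi\oplus\mathbb{K}[x,y]\mathcal{X}_{*}=\mathbb{K}[x,y]\mathcal{X}_0\oplus\mathbb{K}[x,y]\mathcal{X}_{*}$ is free of rank two with $\mathcal{X}_0$ in the basis, as required.

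For the forward implication, suppose $V_f=\mathbb{K}[x,y]\mathcal{X}_0\oplus\mathbb{K}[x,y]\mathcal{X}_{*}$ and put $k_{*}=\phi(\mathcal{X}_{*})$. Writing an arbitrary element as $a\mathcal{X}_0+b\mathcal{X}_{*}$ and applying $\phi$ shows $\operatorname{image}(\phi)=(k_{*})$. Since $\mathcal{X}_1,\mathcal{X}_2\in V_f$ contribute $f_x,f_y\in\operatorname{image}(\phi)=(k_{*})$, the element $k_{*}$ divides $\gcd(f_x,f_y)=1$, so $k_{*}$ is constant; it is nonzero, for otherwise $\mathcal{X}_{*}\in\ker\phi=\mathbb{K}[x,y]\mathcal{X}_0$ would contradict freeness. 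Hence $k_{*}$ is a unit, so $1\in(J(f):f)$, which is exactly $f\in J(f)$. Throughout, the only nonelementary input is the identification of $\ker\phi$ via the regular-sequence property; the rest is a splitting argument for the exact sequence together with the constancy of the cofactor forced by $\gcd(f_x,f_y)=1$.
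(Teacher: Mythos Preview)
Your argument is correct. The necessity direction coincides with the paper's: both expand $\mathcal{X}_1,\mathcal{X}_2$ in the putative basis, read off that $k_*$ divides $f_x$ and $f_y$, and use $\gcd(f_x,f_y)=1$ (from weak tameness) to force $k_*\in\mathbb{K}^\times$.

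The sufficiency direction is where your route diverges. The paper argues pointwise: for each $p_0\notin\Lambda=V(f_x,f_y)$ it views $(P,Q,-k)$ as a solution of a single linear equation in $\mathbb{K}^3$, uses $\beta_0(p_0),\beta_*(p_0)$ as a basis of the solution plane to define functions $R_0,R_*$ on $\mathbb{K}^2\setminus\Lambda$, identifies $R_*=k$, invokes Hartogs' theorem over $\mathbb{C}^2$ to extend across the finite set $\Lambda$, and finally uses coprimality of $f_x,f_y$ to check that $R_0=(kP_*-P)/f_y=(Q-kQ_*)/f_x$ is a polynomial. You instead package everything in the exact sequence $0\to\ker\phi\to V_f\to (J(f):f)\to 0$, identify $\ker\phi$ with the Koszul syzygy $\mathbb{K}[x,y]\mathcal{X}_0$ (which is the same divisibility step the paper performs at the end), and split the sequence with the section $k\mapsto k\mathcal{X}_*$. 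Your approach is purely algebraic and avoids the analytic detour through Hartogs entirely; it also makes transparent why the argument works uniformly over any characteristic-zero subfield $\mathbb{K}\subseteq\mathbb{C}$. The paper's approach, on the other hand, is deliberately ``elementary'' in the sense of using only undergraduate linear algebra plus one complex-analytic fact, and it is this pointwise template that the authors then reuse verbatim for Theorems~\ref{th-main2}, \ref{th-main3} and the higher-dimensional Theorem~\ref{th-n1}.
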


\begin{remark}
	If a vector field  $\mathcal{X} $ can be generated by the other  ones $\mathcal{Y}_1$ and $\mathcal{Y}_2$, i.e., $ \mathcal{X}= R_1 \mathcal{Y}_1 + R_2 \mathcal{Y}_2,$ where $ R_1, R_2 \in \mathbb{K}[x,y]$, 
then the corresponding cofactors $k_{\eta}$, $k_{\eta_1}$ and $k_{\eta_2}$ also satisfy $k_{\eta}=R_1 k_{\eta_1}+R_2 k_{\eta_2}$.
All cofactors associated to $C$ for all vector fields in $V_f$ constitute an ideal $\mathcal{I}_{co}$ in $\mathbb{K}[x,y]$. Obviously, the Jacobian  ideal $J(f)$ is a subideal of $\mathcal{I}_{co}$.
\end{remark}



For an algebraically closed field $\mathbb{K}$, a singular curve $C$ defined by $f\in \mathbb{K}[x,y]$ has only quasi-homogeneous singularities if and only if \( f\in J(f)\) (see, e.g., \cite{CLOS}). Thus, in this case, we have the following  corollary by Theorem \ref{th-main}.

\begin{corollary}\label{cor-1}
	Assume that  $C$ is a singular curve defined by a weakly tame polynomial $f\in \mathbb{K}[x,y]$. Then $C$ has only quasi-homogeneous singularities (in $ \overline{\mathbb{K}}^2$) if and only if the  \({\overline{\mathbb{K}}}[x,y]$-module $V_f\)  is freely generated by two generators, one of which is \(\mathcal{X}_0\).
\end{corollary}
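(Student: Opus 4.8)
The plan is to deduce the corollary by reducing everything to the algebraically closed field $\overline{\mathbb{K}}$ and then combining Theorem~\ref{th-main} with the characterization of quasi-homogeneous singularities recalled immediately above the statement. Since $\mathbb{K}\subseteq\mathbb{C}$ has characteristic $0$, I would fix an algebraic closure $\overline{\mathbb{K}}$ realized inside $\mathbb{C}$; then $\overline{\mathbb{K}}$ is again a characteristic $0$ subfield of $\mathbb{C}$, so Theorem~\ref{th-main} applies verbatim with $\mathbb{K}$ replaced by $\overline{\mathbb{K}}$, and the cited fact from \cite{CLOS} applies because $\overline{\mathbb{K}}$ is algebraically closed. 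The whole corollary is then a matter of chaining two equivalences through the single condition $f\in J(f)$ computed in $\overline{\mathbb{K}}[x,y]$.

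Concretely, I would first verify that the two hypotheses transfer to $\overline{\mathbb{K}}$. The weakly tame property is a condition on the behaviour of $f$ at infinity (on its top-degree homogeneous part), hence a geometric condition unaffected by enlarging the coefficient field, so $f$ remains weakly tame as an element of $\overline{\mathbb{K}}[x,y]$. Moreover, because $\overline{\mathbb{K}}$ is algebraically closed, every singular point of $C$ (every common zero of $f,f_x,f_y$) has coordinates in $\overline{\mathbb{K}}^2$, and quasi-homogeneity at such a point is a local condition invariant under the extension $\overline{\mathbb{K}}\subseteq\mathbb{C}$; thus ``quasi-homogeneous in $\overline{\mathbb{K}}^2$'' agrees with the definition given via $\mathbb{C}^2$. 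With these identifications in place, Theorem~\ref{th-main} over $\overline{\mathbb{K}}$ gives that the $\overline{\mathbb{K}}[x,y]$-module $V_f$ is freely generated by $\mathcal{X}_0$ together with one additional generator if and only if $f\in J(f)=\langle f_x,f_y\rangle$ in $\overline{\mathbb{K}}[x,y]$, while the result of \cite{CLOS} applied to the singular curve $C$ over $\overline{\mathbb{K}}$ gives that $C$ has only quasi-homogeneous singularities if and only if $f\in J(f)$ in $\overline{\mathbb{K}}[x,y]$. Composing these two equivalences proves the statement.

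The proof is thus essentially bookkeeping, and I expect no analytic obstacle; the only genuine care is in the base-change step of the previous paragraph, namely checking that the weakly tame condition and the local notion of quasi-homogeneous singularity are stable under $\mathbb{K}\subseteq\overline{\mathbb{K}}\subseteq\mathbb{C}$, and that the global membership $f\in J(f)$ in $\overline{\mathbb{K}}[x,y]$ correctly encodes the local quasi-homogeneity data at every singular point — which is exactly the content of the characterization invoked from \cite{CLOS}. I would also stress that the hypothesis that $C$ is \emph{singular} is essential rather than cosmetic: for a smooth curve the left-hand condition holds vacuously, yet $f\in J(f)$ may fail (for instance a smooth conic such as $f=x^2+y^2-1$ has $J(f)=\langle x,y\rangle\not\ni f$), so the corollary would be false without it. Restricting to singular $C$ places us precisely in the regime where the \cite{CLOS} characterization is valid, and the corollary follows.
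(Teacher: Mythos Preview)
Your approach is essentially identical to the paper's: the corollary is obtained by applying Theorem~\ref{th-main} over the algebraically closed field $\overline{\mathbb{K}}$ and chaining it with the cited characterization from \cite{CLOS} through the condition $f\in J(f)$ in $\overline{\mathbb{K}}[x,y]$. One small inaccuracy: you describe weakly tame as a condition on the top-degree homogeneous part of $f$, but that is the \emph{tame} condition (Definition~\ref{de-tame}); weakly tame means $\dim_{\mathbb{K}}\mathbb{K}[x,y]/J(f)<\infty$, which is nonetheless preserved under base change since $\overline{\mathbb{K}}[x,y]/J(f)\cong\big(\mathbb{K}[x,y]/J(f)\big)\otimes_{\mathbb{K}}\overline{\mathbb{K}}$, so your conclusion stands.
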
 

This corollary can be regarded as a global version of   K. Saito's classical theorem (Theorem \ref{th-Saito-quasi} below) for arbitrary algebraically closed fields.
From the proof of Theorem \ref{th-main} below, the vector field \( \mathcal{X}_{*}\) can be determined by the decomposition of $f$ in the ideal \(J(f)\). Namely, if \(f=P f_x +Qf_y\) in \(J(f)\), then \( \mathcal{X}_{*}=P\frac{\partial}{\partial x}+Q\frac{\partial}{\partial y} \).

Note that if $ f$ is a quasi-homogeneous polynomial, then $C$ has only quasi-homogeneous singularities. However, the converse does not hold in general.
By Theorem \ref{th-main}, we can also obtain the following result for any field $\mathbb{K}$ (not merely for algebraically closed field). For a weakly tame polynomial $f$, it is quasi-homogeneous if and only if the  \({\mathbb{K}}[x,y]$-module $V_f\)  is freely generated by two generators \(\mathcal{X}_0\) and $\mathcal{X}_{*}= {l}x\frac{\partial}{\partial x} +{m}y\frac{\partial}{\partial y}$, where $(l,m)$ are the weights of $f$ (see Definition \ref{de-quasi} below). This generalizes Example 4 in \cite{Ca-Mo} for the tame polynomial $f$.

It is worth noting that Theorem \ref{th-main} is not valid for all smooth curves, since \( f\in J(f)\) may not hold for some of them. However, when \(f(x,y)=x+g(y)\) (resp. \(f(x,y)=y+g(x)\)), we have  \( f\in J(f)\). In this case, \(\mathcal{X}_{*}=\mathcal{X}_{1}\) (resp.  \(\mathcal{X}_{*}=\mathcal{X}_{2}\)), where \(g(y)\) is a polynomial in \(y\) (resp. \(g(x)\) is a polynomial in \(x\) ).
 
 For a general polynomial $f$ (not necessarily weakly tame), results on the conditions  for the $\mathbb{K}[x,y]$-module $V_f$ to have rank $2$ are scarce, to the best of our knowledge. 
 Let $D$ denote the greatest common divisor of  $f_x$ and $f_y$. Then $\mathcal{X}_0/D$ belongs to $ V_f$. We thus obtain  the following results. 
 
 \begin{theorem}	\label{th-main2}
For any algebraic curve $C$ defined by a  polynomial $f(x,y) \in \mathbb{K}[x,y]$,  if the $\mathbb{K}[x,y]$-module $V_f$ 	is freely generated by the  polynomial vector fields $\mathcal{X}_0/D$ and an additional generator $\mathcal{X}_{*}$, then  $f$ belongs to the ideal $\langle f_x/D, f_y/D\rangle$ in $\mathbb{K}[x,y]$.
\end{theorem}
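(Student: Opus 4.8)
The plan is to reduce the divisibility statement $f\in\langle f_x/D,\,f_y/D\rangle$ to a statement about cofactors, exploiting the linearity already recorded in the Remark. Write $u=f_x/D$ and $v=f_y/D$, so that $\gcd(u,v)=1$ and $\mathcal{X}_0/D=-v\frac{\partial}{\partial x}+u\frac{\partial}{\partial y}$, whose cofactor is $0$ (indeed $-vf_x+uf_y=\tfrac1D(-f_yf_x+f_xf_y)=0$). The first observation I would isolate is that the assignment $\mathcal{X}\mapsto k_{\mathcal{X}}$, sending each vector field in $V_f$ to its cofactor, is a $\mathbb{K}[x,y]$-linear map whose image is precisely the cofactor ideal $\mathcal{I}_{co}$; this is exactly the additivity $k_{\mathcal{X}}=R_1k_{\mathcal{Y}_1}+R_2k_{\mathcal{Y}_2}$ from the Remark.

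Next I would use the generation hypothesis. Since $V_f$ is generated by $\mathcal{X}_0/D$ and $\mathcal{X}_*$, and the first generator has cofactor $0$, the image of the cofactor map is the principal ideal $\mathcal{I}_{co}=\langle k_*\rangle$, where $k_*$ denotes the cofactor of $\mathcal{X}_*$. Because $\mathcal{X}_1=f\frac{\partial}{\partial x}$ and $\mathcal{X}_2=f\frac{\partial}{\partial y}$ lie in $V_f$ with cofactors $f_x$ and $f_y$, we obtain $f_x,f_y\in\langle k_*\rangle$, that is $k_*\mid f_x$ and $k_*\mid f_y$. As $\mathbb{K}[x,y]$ is a UFD this forces $k_*\mid\gcd(f_x,f_y)=D$, so I may write $D=k_*\,g$ for some $g\in\mathbb{K}[x,y]$.

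Finally I would feed this back into the defining relation of $\mathcal{X}_*$. Writing $\mathcal{X}_*=A\frac{\partial}{\partial x}+B\frac{\partial}{\partial y}$, membership in $V_f$ means $Af_x+Bf_y=k_*f$. Substituting $f_x=Du=k_*gu$ and $f_y=Dv=k_*gv$ gives $k_*\,g(Au+Bv)=k_*f$; cancelling the nonzero factor $k_*$ yields $f=g(Au+Bv)=(gA)\,u+(gB)\,v$, which exhibits $f$ as an element of $\langle u,v\rangle=\langle f_x/D,\,f_y/D\rangle$, as required.

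The only points requiring care are the degenerate cases and the cancellation. If $f$ is non-constant then at least one partial derivative is nonzero, so $D$ is a nonzero polynomial and $k_*$ divides a nonzero polynomial, whence $k_*\neq0$ and the cancellation in the last step is legitimate; the constant case is vacuous or trivial and can be dispatched separately. I expect the main (though modest) obstacle to be the clean handling of the divisibility $k_*\mid D$ up to units in the gcd, together with verifying that the cofactor map is genuinely well defined and $\mathbb{K}[x,y]$-linear on all of $V_f$; once these are in place the computation closes immediately. Note in particular that freeness of the generating pair is never used beyond the bare fact that $\mathcal{X}_0/D$ and $\mathcal{X}_*$ \emph{generate} $V_f$.
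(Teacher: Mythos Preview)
Your argument is correct and follows essentially the same route as the paper: both proofs express $\mathcal{X}_1,\mathcal{X}_2\in V_f$ in terms of the generators, compare cofactors to deduce $k_*\mid f_x$ and $k_*\mid f_y$ (hence $k_*\mid D$), and then rewrite the relation $P_*f_x+Q_*f_y=k_*f$ as $f=(P_*\,D/k_*)\,(f_x/D)+(Q_*\,D/k_*)\,(f_y/D)$. Your observation that only generation (not freeness) is used, and your explicit check that $k_*\neq 0$ to justify the cancellation, are nice touches that the paper leaves implicit.
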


The converse of the above theorem does not generally hold. However, we have the following result.

 \begin{theorem}	\label{th-main3}
For any algebraic curve $C$ defined by a  polynomial $f(x,y) \in \mathbb{K}[x,y]$,  
if  $f$ belongs to the Jacobian ideal $J(f)=\langle f_x, f_y\rangle$ in $\mathbb{K}[x,y]$, then 
the $\mathbb{K}[x,y]$-module $V_f$ 	is freely generated by the polynomial vector field $\mathcal{X}_0/D$ and an additional generator $\mathcal{X}_{*}$.
\end{theorem}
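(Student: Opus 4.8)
The plan is to reduce everything to the structure of the syzygy module of the pair $(f_x,f_y)$ after dividing out their greatest common divisor. First I would record two elementary consequences of the setup. Since $D=\gcd(f_x,f_y)$ divides both $f_x$ and $f_y$, the field $\mathcal{X}_0/D=-(f_y/D)\frac{\partial}{\partial x}+(f_x/D)\frac{\partial}{\partial y}$ is genuinely polynomial; I write $a=f_x/D$ and $b=f_y/D$, so that $\gcd(a,b)=1$ in the UFD $\mathbb{K}[x,y]$. Because $f\in J(f)$ and $D$ divides every element of $\langle f_x,f_y\rangle$, we get $D\mid f$; fixing a decomposition $f=Af_x+Bf_y$ with $A,B\in\mathbb{K}[x,y]$, I set $\mathcal{X}_{*}=A\frac{\partial}{\partial x}+B\frac{\partial}{\partial y}$. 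This $\mathcal{X}_{*}$ lies in $V_f$ with cofactor $k=1$, and moreover $Aa+Bb=f/D=:g$ is a nonzero polynomial since $f\neq 0$.

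For generation, take an arbitrary $\mathcal{X}=P\frac{\partial}{\partial x}+Q\frac{\partial}{\partial y}\in V_f$ with cofactor $k$, i.e.\ $Pf_x+Qf_y=kf$. Subtracting the multiple $k\mathcal{X}_{*}$ kills the cofactor: $(P-kA)f_x+(Q-kB)f_y=kf-kf=0$. Dividing this relation by $D$ gives $(P-kA)a+(Q-kB)b=0$, and since $\gcd(a,b)=1$ the standard syzygy argument in a UFD forces $(P-kA,Q-kB)=R_1(-b,a)$ for a single polynomial $R_1$. Reassembling yields $\mathcal{X}=R_1(\mathcal{X}_0/D)+k\,\mathcal{X}_{*}$, so the two fields generate $V_f$.

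For freeness, suppose $R_1(\mathcal{X}_0/D)+R_2\mathcal{X}_{*}=0$. Comparing the $\frac{\partial}{\partial x}$ and $\frac{\partial}{\partial y}$ components gives $-R_1 b+R_2 A=0$ and $R_1 a+R_2 B=0$; multiplying the first by $a$, the second by $b$, and adding eliminates $R_1$ to give $R_2(Aa+Bb)=R_2 g=0$, whence $R_2=0$ because $g\neq 0$. Then $R_1 a=R_1 b=0$ with $a,b$ not both zero forces $R_1=0$. Hence $\{\mathcal{X}_0/D,\ \mathcal{X}_{*}\}$ is a free basis of $V_f$.

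The computation is short, and I expect no serious obstacle: the only points needing care are the implication $f\in J(f)\Rightarrow D\mid f$ (so that $\mathcal{X}_{*}$ and the rescaled Hamiltonian are well defined) and the precise identification of the cofactor-zero submodule as the rank-one syzygy module generated by $\mathcal{X}_0/D$, both of which rest on unique factorization in $\mathbb{K}[x,y]$ and the coprimality $\gcd(a,b)=1$. This is essentially the mechanism behind Theorem \ref{th-main}, with the division by $D$ removing the need for weak tameness, so the argument should go through for an arbitrary polynomial $f$.
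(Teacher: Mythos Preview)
Your argument is correct. The reduction to the syzygy module of the coprime pair $(a,b)=(f_x/D,f_y/D)$ gives exactly what is needed: once you subtract $k\,\mathcal{X}_*$ the remainder lands in $\ker\big((P,Q)\mapsto Pa+Qb\big)$, which in the UFD $\mathbb{K}[x,y]$ is the free rank-one module generated by $(-b,a)$. Freeness follows cleanly from $g=f/D\neq 0$.

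The route, however, is genuinely different from the paper's. The paper argues pointwise: at each $p_0$ outside a codimension-two set $\Lambda$ it views $(P,Q,-k)$ as a vector in the $2$-dimensional solution space of a single linear equation, reads off $R_*$ and $R_0$ as functions on $\mathbb{K}^2\setminus\Lambda$, and then invokes Hartogs' extension over $\mathbb{C}^2$ to conclude that these functions are polynomials. Your proof stays entirely inside $\mathbb{K}[x,y]$ and never leaves the category of polynomial identities; the single input is unique factorisation. This is more elementary (no passage to $\mathbb{C}$, no Hartogs), and it makes transparent that the only obstruction to dropping weak tameness in Theorem~\ref{th-main} was the failure of $\gcd(f_x,f_y)=1$, which dividing by $D$ repairs. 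The paper's pointwise viewpoint, on the other hand, is what generalises directly to the $n$-variable situation of Theorem~\ref{th-n1}, where one needs a basis of an $n$-dimensional solution space and the Cramer-rule description of the coefficients $R_j$.
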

Similar to Corollary \ref{cor-1}, we can also obtain the following result for a singular curve with only quasi-homogeneous singularities.
\begin{corollary}\label{cor-2}
Assume that  $C$ is a singular curve defined by  $f\in \mathbb{K}[x,y]$. If $C$ has only quasi-homogeneous singularities on $\overline{\mathbb{K}}^2$, then the  \({\overline{\mathbb{K}}}[x,y]$-module $V_f\)  is freely generated by two generators, one of which is \(\mathcal{X}_0/D\).
	
\end{corollary}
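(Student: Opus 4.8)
The plan is to deduce Corollary \ref{cor-2} from Theorem \ref{th-main3} applied over the algebraically closed field $\overline{\mathbb{K}}$, exactly as Corollary \ref{cor-1} was deduced from Theorem \ref{th-main}, using the classical algebraic characterization of quasi-homogeneous singularities recalled above (see \cite{CLOS}). The argument is a short composition of two facts already in hand, so I expect no genuine difficulty; the only delicate point is the bookkeeping of the base field.

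First I would observe that since $\mathbb{K}\subseteq\mathbb{C}$ is a characteristic $0$ subfield, its algebraic closure $\overline{\mathbb{K}}$ may itself be realized as a characteristic $0$ subfield of $\mathbb{C}$ (take the algebraic closure of $\mathbb{K}$ inside $\mathbb{C}$). Hence Theorem \ref{th-main3} is available verbatim with $\overline{\mathbb{K}}$ in place of $\mathbb{K}$, and the objects $V_f$, $D=\gcd(f_x,f_y)$, $\mathcal{X}_0$ and $\mathcal{X}_0/D$ all make sense over $\overline{\mathbb{K}}[x,y]$. Now, because $C$ is singular and all of its singularities on $\overline{\mathbb{K}}^2$ are quasi-homogeneous, and because $\overline{\mathbb{K}}$ is algebraically closed, the cited criterion gives the membership $f\in J(f)=\langle f_x,f_y\rangle$ in $\overline{\mathbb{K}}[x,y]$. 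Feeding this into Theorem \ref{th-main3} over $\overline{\mathbb{K}}$ yields immediately that the $\overline{\mathbb{K}}[x,y]$-module $V_f$ is freely generated by $\mathcal{X}_0/D$ together with one additional generator $\mathcal{X}_*$, which is precisely the assertion.

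The only subtlety, as anticipated, is to apply both the singularity criterion and Theorem \ref{th-main3} over $\overline{\mathbb{K}}$ rather than over $\mathbb{K}$; in particular $D$ and hence $\mathcal{X}_0/D$ are to be understood in $\overline{\mathbb{K}}[x,y]$. If one wishes to keep $D$ as the gcd computed in $\mathbb{K}[x,y]$, one invokes the stability of $\gcd(f_x,f_y)$ under the extension $\mathbb{K}\hookrightarrow\overline{\mathbb{K}}$, valid in characteristic $0$, so that the two candidate vector fields $\mathcal{X}_0/D$ differ only by a nonzero scalar of $\overline{\mathbb{K}}$ and the conclusion is unchanged.
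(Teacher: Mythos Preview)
Your argument is correct and follows exactly the route the paper intends: pass to the algebraically closed field $\overline{\mathbb{K}}$, invoke the criterion that quasi-homogeneous singularities are characterized by $f\in J(f)$, and then apply Theorem~\ref{th-main3}. The paper itself gives no separate proof of Corollary~\ref{cor-2}, merely remarking that it is obtained ``similar to Corollary~\ref{cor-1}'', so your write-up (including the base-field bookkeeping for $D$) is if anything more detailed than what the paper provides.
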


The above problem is also meaningful in $\mathbb{K}^n$ of higher dimension $n\geq 3$.  In \cite{DMR} the authors give the normal form of all polynomial differential systems
in $\mathbb{R}^3$ having a weighted homogeneous surface $f= 0$ as an invariant algebraic surface
and characterize among these systems those having a Darboux invariant constructed
uniquely using this invariant surface.
However, to the best of our knowledge, the related results over arbitrary field $\mathbb{K}$ remain scarce. In Section \ref{sec-nd} we apply our method to this case and present some conditions that guarantee the module of vector fields admitting a given hypersurface is freely generated by a  generating set of minimal cardinality $n$ over any field $\mathbb{K}$.

The paper is organized as follows. Section \ref{sec-pre} introduces fundamental definitions and theorems. In Section \ref{sec-2d} we prove the results on $\mathbb{K}^2$ and give the details of our elementary method. Section \ref{sec-nd} addresses the case of $\mathbb{K}^n$ of dimension $n\geq 3$.

 \section{Preliminaries}\label{sec-pre}
Regarding the definitions of weakly tame polynomials and quasi-homogeneous singularities, we maintain terminological consistency with \cite{Ca-Mo}. 

\begin{definition}\label{de-quasi}
	We say that $f\in \mathbb{K}[x,y]$ is a quasi-homogeneous polynomial of degree $d$ with weights $ (l,m)$, if there exist  three non-negative integers $l$, $m$ and $d$ such that $f(t^l x, t^m y)=t^d f(x,y)$ for any $(x,y)\in \mathbb{K}^2$. 
\end{definition}

A polynomial $f\in \mathbb{K}[x,y]$ has a quasi-homogeneous decomposition of
$f=f_d +\cdots +f_2 +f_1 +f_0$ into degree $i$ quasi-homogeneous pieces $f_i$ with given weights $(l,m)$. 

\begin{definition}\label{de-tame}
A polynomial $f\in \mathbb{K}[x,y]$ is called tame if the highest degree $d$ quasi-homogeneous pieces $g:=f_d$ has finite dimensional Milnor vector space $ \mathbb{K}[x,y]/J(g)$.
A polynomial $f\in \mathbb{K}[x,y]$ is called weakly tame if the Milnor vector space $\mathbb{K}[x,y]/J(f)$ is finite dimensional.	
\end{definition}

The following theorems come from K. Saito's classical results over the field $\mathbb{C}$ (see \cite{Saito1,Saito2}), and they describes the local properties of quasi-homogeneous singularities and $\mathcal{O}_{\mathbb{C}^2,p}$-module, where  $\mathcal{O}_{\mathbb{C}^2,p}$ denotes the ring of germs of holomorphic functions at $p$.  

\begin{definition}\label{def-quasi-s}
Assume the origin $O\in \mathbb{C}^2$ is a singularity of  $f(x,y)$, where $f\in \mathcal{O}_{\mathbb{C}^2,O}$. The singularity $O$ is called quasi-homogeneous  if there is a holomorphic change of coordinates $(u,v)\in \mathbb{C}^2$ in a neighborhood of $O$ such that $(u(0),v(0))=(0,0)$ and  $f(x(u,v),y(u,v))=h(u,v)g(u,v)$, where $h$ is holomorphic satisfying $h(0,0)\neq 0$	and $g$ is a quasi-homogeneous polynomial in $u$ and $v$.  
\end{definition}

\begin{theorem}[\cite{Saito1}]\label{th-Saito-quasi}
 A germ of curve singularity $f=0$ is quasi-homogeneous if and only if $f$ belongs to the Jacobian ideal $J(f)$ in  $\mathcal{O}_{\mathbb{C}^2,p}$.	
\end{theorem}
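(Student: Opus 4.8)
The plan is to establish the two implications separately: the forward implication via Euler's identity, and the converse by turning the membership $f\in J(f)$ into a holomorphic vector field that reproduces $f$, which one then linearizes. Throughout I write $\mathcal{O}=\mathcal{O}_{\mathbb{C}^2,p}=\mathbb{C}\{x,y\}$ with maximal ideal $\mathfrak{m}$, and I use the isolated-singularity hypothesis in the form $\dim_{\mathbb{C}}\mathcal{O}/J(f)<\infty$. First I would record that the condition $f\in J(f)$ is invariant under a biholomorphic change of coordinates $\phi$ fixing $p$: the chain rule expresses the pair $(\partial_u(f\circ\phi),\partial_v(f\circ\phi))$ as the image of $((f_x)\circ\phi,(f_y)\circ\phi)$ under the invertible Jacobian of $\phi$, so the two Jacobian ideals coincide and membership is preserved. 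Hence for the forward direction I may assume, after such a change, that $f=h\,g$ with $h(p)\neq 0$ and $g$ quasi-homogeneous of some degree $d>0$ with weights $(l,m)$. Applying the Euler field $E:=l x\,\partial_x+m y\,\partial_y$ and using $E(g)=d g$ gives
\[
 l x f_x+m y f_y=E(f)=E(h)\,g+h\,E(g)=\bigl(E(h)+d h\bigr)g=w f,
\]
where $w:=(E(h)+d h)/h$ is a unit since $E(h)(p)=0$ and $d\neq 0$; dividing by $w$ exhibits $f\in J(f)$.

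For the converse, suppose $f=a f_x+b f_y$ with $a,b\in\mathcal{O}$, and set $\chi:=a\,\partial_x+b\,\partial_y$, so that $\chi(f)=f$. I would proceed in three steps. Step (i): show the representation can be chosen with $a,b\in\mathfrak{m}$, i.e. $\chi(p)=0$. Matching the terms of lowest homogeneous degree in $\chi(f)=f$ forces the constant part of $\chi$ to annihilate the initial form $f_\mu$ of $f$, and the finiteness of $\mathcal{O}/J(f)$ is what allows this constant part to be removed. Step (ii): write $\chi=L+(\text{higher order})$ with $L$ the linear part; comparing degree-$\mu$ terms in $\chi(f)=f$ yields $L(f_\mu)=f_\mu$, so $f_\mu$ is an eigenvector, with eigenvalue $1$, of $L$ acting on forms of degree $\mu$. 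After diagonalizing $L$ as $\lambda_1 u\,\partial_u+\lambda_2 v\,\partial_v$, every monomial $u^iv^j$ occurring in $f_\mu$ must satisfy $\lambda_1 i+\lambda_2 j=1$; inspecting the monomials actually present in $f$ (together with the isolated-singularity condition) then pins the $\lambda_k$ down to positive rational numbers and shows $L$ is semisimple.

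Step (iii) is the linearization. Since the eigenvalues are positive, $\chi$ lies in the Poincaré domain, so the Poincaré–Dulac theorem supplies an analytic coordinate change after which $\chi$ equals its linear part plus finitely many resonant monomial terms, the resonance relations being exactly the integral relations that define the quasi-homogeneous weights. In these coordinates the equation $\chi(f)=f$ constrains $f$ to be a finite sum of monomials $u^iv^j$ with $\lambda_1 i+\lambda_2 j=1$; clearing denominators $\lambda_1=l/N,\ \lambda_2=m/N$ gives $li+mj=N$ for each such monomial, i.e. $f(t^l u,t^m v)=t^N f(u,v)$. Thus $f$ is a quasi-homogeneous polynomial in the new coordinates, which is the required conclusion of Definition \ref{def-quasi-s} (in fact with trivial unit $h\equiv 1$).

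The main obstacle is step (iii), interlocked with the positivity and rationality assertions of step (ii). Positive rational eigenvalues are maximally resonant, so $\chi$ generally cannot be brought to a purely linear field, and the delicate point is to control the surviving resonant terms: one must argue either that they can be absorbed by a further grading-compatible coordinate change, or that they are harmless because they preserve the eigenvalue-$1$ eigenspace that contains $f$. Making this precise — equivalently, exhibiting directly a coordinate change that carries $f$ onto its own initial quasi-homogeneous form — is the technical core of the argument, whereas the forward implication and the reductions in steps (i)–(ii) are comparatively routine.
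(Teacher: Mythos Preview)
The paper does not prove this theorem. Theorem~\ref{th-Saito-quasi} is stated in Section~\ref{sec-pre} as a preliminary result, attributed to Saito \cite{Saito1} and quoted without proof; the paper only \emph{uses} it (together with Theorem~\ref{th-Saito2}) to deduce Corollaries~\ref{cor-1} and~\ref{cor-2} and to comment on an alternative route to the sufficiency in Theorem~\ref{th-main}. There is therefore no ``paper's own proof'' to compare your proposal against.

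That said, your sketch follows the classical line of Saito's original argument: the forward direction via the Euler identity is correct as written, and for the converse you correctly isolate the vector field $\chi$ with $\chi(f)=f$, reduce to $\chi(p)=0$, and then appeal to Poincar\'e--Dulac normalization. You are also right that the genuine content lies in step~(iii), where the eigenvalues are positive rationals (hence highly resonant) and one must show that the surviving resonant terms can be removed by a further change of coordinates compatible with the weighted grading; this is precisely what Saito carries out in \cite{Saito1}. Your step~(i), however, is a bit glib: the vanishing of the constant part of $\chi$ on the initial form $f_\mu$ only tells you that $f_\mu$ is constant along the direction $(a(p),b(p))$, and since $f_\mu$ need not itself have an isolated critical point, turning this into $a,b\in\mathfrak{m}$ requires more care than you indicate.
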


\begin{theorem}[\cite{Saito2}]\label{th-Saito2}
Assume $f\in \mathcal{O}_{\mathbb{C}^2,p}$. The $\mathcal{O}_{\mathbb{C}^2,p}$-module of holomorphic 1-forms tangent to $f=0$ is 
 freely generated if and only if it has two elements $\eta_0$ and $\eta_{\infty}$ such that $\eta_0 \wedge \eta_{\infty}=f \mathrm{d}x \wedge \mathrm{d}y$.	
\end{theorem}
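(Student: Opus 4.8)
The plan is to prove both implications by expanding an arbitrary tangent form in the candidate pair via Cramer's rule and then controlling the resulting coefficients through the behaviour of tangent forms at the smooth points of the curve, together with a codimension argument across the (isolated) singular points. Throughout I write $\mathcal{O} := \mathcal{O}_{\mathbb{C}^2,p}$, let $\Omega^1$ denote the module of germs of holomorphic $1$-forms at $p$, and let $M\subseteq\Omega^1$ be the $\mathcal{O}$-module of forms $\omega$ tangent to $\{f=0\}$, i.e. those with $\omega\wedge \mathrm{d}f = c_\omega\, f\,\mathrm{d}x\wedge\mathrm{d}y$ for some $c_\omega\in\mathcal{O}$. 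I assume $f$ defines a reduced germ, which is where divisibility by $f$ will enter: a holomorphic function vanishing on $\{f=0\}$ then lies in $(f)$. Note that $\mathrm{d}f$ and $f\,\mathrm{d}x,\ f\,\mathrm{d}y$ all belong to $M$, so $M$ has generic rank $2$.

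For sufficiency, suppose $\eta_0,\eta_\infty\in M$ satisfy $\eta_0\wedge\eta_\infty = f\,\mathrm{d}x\wedge\mathrm{d}y$. Since this wedge is nonzero, $\eta_0$ and $\eta_\infty$ are independent over the fraction field, so any $\omega\in M$ can be written $\omega = a\eta_0 + b\eta_\infty$ with $a,b$ meromorphic. Wedging with $\eta_\infty$ and with $\eta_0$ gives, by Cramer, $a\,f\,\mathrm{d}x\wedge\mathrm{d}y = \omega\wedge\eta_\infty$ and $b\,f\,\mathrm{d}x\wedge\mathrm{d}y = \eta_0\wedge\omega$; the right-hand sides are holomorphic $2$-forms, so $g := af$ and $h := bf$ lie in $\mathcal{O}$. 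It remains to prove $f\mid g$ and $f\mid h$, i.e. that $a,b$ are actually holomorphic. I expect this divisibility to be the crux of the whole argument.

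To establish it, I would evaluate at a smooth point $q$ of $\{f=0\}$, where $\mathrm{d}f(q)\neq 0$. Tangency forces every element of $M$ to be proportional to $\mathrm{d}f$ at $q$: from $\omega\wedge\mathrm{d}f\in (f)$ and $f(q)=0$ one gets $\omega(q)\wedge\mathrm{d}f(q)=0$, hence $\omega(q)\parallel\mathrm{d}f(q)$, and likewise for $\eta_\infty$. Therefore $g(q)\,\mathrm{d}x\wedge\mathrm{d}y=\omega(q)\wedge\eta_\infty(q)=0$, so $g$ vanishes at every smooth point of the curve, thus on all of $\{f=0\}$ by continuity; reducedness of $f$ then upgrades this to $f\mid g$, and symmetrically $f\mid h$. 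Consequently $a,b\in\mathcal{O}$, so $\{\eta_0,\eta_\infty\}$ generates $M$ and, being independent, freely generates it.

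For necessity, suppose $M$ is freely generated by a basis $\beta_0,\beta_1$ and write $\beta_0\wedge\beta_1 = w\,\mathrm{d}x\wedge\mathrm{d}y$ with $w\in\mathcal{O}$. The same smooth-point computation shows $w$ vanishes on $\{f=0\}$, hence $w=uf$ with $u\in\mathcal{O}$. To see that $u$ is a unit, I would pass to the local model at a smooth point of the curve: choosing coordinates $(s,t)$ with $f=s$, the tangency condition reads $B\in (s)$ for $\omega=A\,\mathrm{d}s+B\,\mathrm{d}t$, so $M=\mathcal{O}\,\mathrm{d}s\oplus f\mathcal{O}\,\mathrm{d}t$ and any basis wedges to $f\,\mathrm{d}x\wedge\mathrm{d}y$ up to a unit; the same holds trivially off the curve, where $f$ is a unit and $M=\Omega^1$. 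Thus $u$ is a unit on the complement of $\mathrm{Sing}(\{f=0\})$, which near $p$ is a finite set of codimension $2$. Since the zero locus of a non-unit germ in $\mathcal{O}$ is a curve (codimension $1$) and cannot be contained in a codimension-$2$ set, $u$ has no zeros and is a unit; replacing $\beta_0$ by $u^{-1}\beta_0$ yields $\eta_0,\eta_\infty\in M$ with $\eta_0\wedge\eta_\infty=f\,\mathrm{d}x\wedge\mathrm{d}y$. The main obstacle, as flagged, is the holomorphicity of the Cramer coefficients; the geometric input that dissolves it is that tangent forms collapse onto $\mathbb{C}\,\mathrm{d}f$ at smooth points, which converts wedge-vanishing into divisibility by the reduced equation.
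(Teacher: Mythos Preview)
The paper does not prove this statement; it is quoted from Saito \cite{Saito2} as a preliminary fact, so there is no in-paper proof to compare against. I will therefore comment on your attempt directly.

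Your sufficiency direction is correct and is essentially Saito's own argument: Cramer's rule gives meromorphic coefficients $a,b$ with $af,bf\in\mathcal{O}$, and the observation that every tangent form becomes proportional to $\mathrm{d}f$ at a smooth point of the curve forces $af$ and $bf$ to vanish along $\{f=0\}$, whence $a,b\in\mathcal{O}$ by reducedness.

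The necessity direction, however, has a genuine gap. After writing $w=uf$ you claim $u$ is a unit at each $q\notin\mathrm{Sing}(\{f=0\})$ because $M_q$ is free there and ``any basis wedges to $f\,\mathrm{d}x\wedge\mathrm{d}y$ up to a unit.'' That conclusion applies to $\beta_0,\beta_1$ only once you know their germs at $q$ still form a basis of $M_q$; but for a free rank-two module over the local ring $\mathcal{O}_q$ this is equivalent to $w(q)\neq0$, i.e.\ to $u(q)\neq0$, which is precisely what you are trying to establish. As written the step is circular. Two standard fixes: (i) invoke coherence of the sheaf $q\mapsto M_q$ (it is the kernel of $\Omega^1\to\Omega^2/f\Omega^2$, $\omega\mapsto\omega\wedge\mathrm{d}f$), so freeness at $p$ propagates to nearby stalks; or (ii) argue algebraically at $p$: expanding $f\,\mathrm{d}x$ and $f\,\mathrm{d}y$ in the basis $\beta_0,\beta_1$ and applying Cramer the other way shows that $u$ divides every coefficient of $\beta_0$ and $\beta_1$, hence $M\subseteq u\,\Omega^1$; since $\mathrm{d}f\in M$ this forces $u\mid f_x$ and $u\mid f_y$. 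For reduced $f$ with $f(p)=0$ one has $\gcd(f_x,f_y)=1$ in $\mathcal{O}_p$ (a common irreducible factor $\pi$ would make $f$ constant, hence zero, along the connected germ $\{\pi=0\}$, giving $\pi\mid f$ and contradicting reducedness), so $u$ is a unit.
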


 \section{Proof of Theorem \ref{th-main} to Theorem \ref{th-main3}}\label{sec-2d}
 
The sufficiency part of Theorem \ref{th-main} can be obtained from Remark 4 in \cite{Ca-Mo} through Theorem \ref{th-Saito-quasi}, Theorem \ref{th-Saito2} and sheaf theory. Here we provide an elementary proof using only linear algebra. 

\begin{proof}[Proof of Theorem \ref{th-main}]
	To obtain the necessity, we only need to show that if  $V_f$ can be generated by $\mathcal{X}_0 $ and $\mathcal{X}_{*}$, then 
	the cofactor $k_{*}$ associated with $\mathcal{X}_{*}$  must be a nonzero constant $ c \in \mathbb{K}$.
	Let $\mathcal{X}_{*}=P_{*}\frac{\partial}{\partial x} +Q_{*}\frac{\partial}{\partial y}$. Then there exist  polynomials $R_{ij}\in\mathbb{K}[x,y]$, $i,j=1,2,$ such that 
	\begin{equation}
		\begin{array}{rcl}
		\mathcal{X}_1&=& R_{11}\mathcal{X}_0+R_{12}\mathcal{X}_{*},\\
		\mathcal{X}_2&=& R_{21}\mathcal{X}_0+R_{22}\mathcal{X}_{*}.	
		\end{array}
	\end{equation}
	By comparing the cofactors, we get
	\begin{equation}\label{eq3}
		\begin{array}{lcl}
		f_x= R_{12}k_{*},\ \ f_y=R_{22}k_{*}.& 
		\end{array}
	\end{equation}
That is, 	$k_{*} $ is a common factor of $f_x$ and $f_y$. Since for a weakly tame polynomial $f$ the partial derivatives  $f_x$ and $f_y$ are coprime, it follows that  $k_{*} $ is a nonzero constant. Therefore, $f=(P_{*} f_x+Q_{*} f_y)/k_{*} \in J(f)$.

For sufficiency, since $f\in J(f)$, we have  $f=P_{*} f_x+Q_{*} f_y$, where \( P_{*}, Q_{*}\in \mathbb{K}[x,y]\). Then $\mathcal{X}_{*}:=P_{*}\frac{\partial}{\partial x} +Q_{*}\frac{\partial}{\partial y}$ belongs to $V_f$ and has cofactor $k_{*}=1$.
It suffices to prove that for any $\mathcal{X}:=P\frac{\partial}{\partial x} +Q\frac{\partial}{\partial y}\in  V_f$ having cofactor $k$,   there exists a unique pair of polynomials $R_0$ and $R_{*}$ in $\mathbb{K}[x,y]$ such that $\mathcal{X}=R_0 \mathcal{X}_0 +R_{*} \mathcal{X}_{*}$.

Denote by $\Lambda$ the set of points $p=(x,y)\in \mathbb{C}^2$ such that $f_x(x,y)=f_y(x,y)=0$. Since $f$ is a weakly tame polynomial, $\Lambda$ is a finite set. 
Rewrite the equality \eqref{eq1} as follows: 
\[ f_x P + f_y Q - f k = 0. \]
For any point $p_0 = (x_0, y_0) \in \mathbb{K}^2$, consider the 3-tuples
\[\alpha(p_0) := \big( f_x(x_0, y_0), \, f_y(x_0, y_0), \, f(x_0, y_0) \big)\]
and
\[\beta(p_0) := \big( P(x_0, y_0), \, Q(x_0, y_0), \, -k(x_0, y_0) \big)\]
as vectors in the vector space $\mathbb{K}^3$.
 Then $\beta(p_0)$ is a solution to the system of linear equations
 \begin{equation}\label{eq-main}
  f_x(x_0,y_0)X+ f_y(x_0,y_0)Y+ f(x_0,y_0)Z=0.	
 \end{equation}
Moreover, the solution space $L(p_0)$ of this system is a vector space $\mathbb{K}^2$ at any point $p_0\in \mathbb{K}^2\setminus \Lambda$.

Note that two vectors \[\beta_0(p_0)=(-f_y(x_0,y_0), f_x(x_0,y_0),0)\]
and 
 \[\beta_{*}(p_0)=(P_{*}(x_0,y_0), Q_{*}(x_0,y_0), -1)\]
  associated with  $\mathcal{X}_0$ and $\mathcal{X}_{*} $
form a basis of $L(p_0)$ at each  point $p_0\in \mathbb{K}^2\setminus \Lambda$. 
Therefore, the vector $ \beta(p_0)$ associated with a vector field $\mathcal{X} \in V_f$ can be  uniquely decomposed into a linear combination of $\beta_0(p_0) $ and $\beta_{*}(p_0)$. Namely, there exists a unique pair of numbers $ R_{0}(p_0)$ and $R_{*}(p_0)$ in $\mathbb{K}$ such that 
\[ \beta(p_0)=R_{0}(p_0)\beta_0(p_0) +R_{*}(p_0)\beta_{*}(p_0).\]
This defines two functions $R_0$ and $ R_{*}$ from $\mathbb{K}^2\setminus \Lambda $ to $\mathbb{K}$ such that $ \mathcal{X}=R_0 \mathcal{X}_0 +R_{*} \mathcal{X}_{*}$.  Thus, on $\mathbb{K}^2\setminus \Lambda $, we have
\begin{equation}\label{eq-R}
	\begin{array}{rcl}
		R_{*}  &=& k,\\
		-f_y R_0 +P_{*} R_{*}&=& P,\\
	f_x R_0 +Q_{*} R_{*}&=&Q.		 
	\end{array}
\end{equation} 
Clearly, $ R_{*}$  coincides with the polynomial  $k$ at every point of $\mathbb{K}^2\setminus \Lambda $. If the equality $ R_{*}=k$ can be extended to $\Lambda$, then by the equalities \eqref{eq-R}, we can obtain that 
\begin{equation}
	\begin{array}{rcl}
		R_{0}  &=& \dfrac{kP_{*}-P}{f_y}=\dfrac{Q-kQ_{*}}{f_x},	 
	\end{array}
\end{equation} 
which implies $ f_x (kP_{*}-P)=f_y(Q-kQ_{*})$. Using the weak tameness of $f$, we conclude that $f_x$ divides $ Q-kQ_{*} $ and  $f_y$ divides $ kP_{*}-P $, i.e., $R_0$ is a polynomial. 

To complete the proof, all that remains is to demonstrate that the equality $ R_{*}=k$ can be extended to $\Lambda$. 
For this, we deal with the above functions in $\mathbb{C}$. By similar arguments, the equality $R_{*}=k$ can be defined on $\mathbb{C}^2\setminus\Lambda$. According to Hartogs' Extension Theorem, $R_{*}$ can be holomorphically extended to $\mathbb{C}^2$, i.e., \(R_{*}\in \mathbb{C}[x,y]\). In addition, $R_{*}$ takes values in $\mathbb{K}$ on  $\mathbb{K}^2\setminus\Lambda$, thus it is a polynomial in \(\mathbb{K}[x,y]\), i.e.,  $R_{*}=k$ holds on $\mathbb{K}^2$.
The proof is finished.
\end{proof}

\begin{proof}[Proof of Theorem \ref{th-main2}]
Assume that $V_f$ is generated by $\mathcal{X}_0/D $ and $\mathcal{X}_{*}=P_{*}\frac{\partial}{\partial x} +Q_{*}\frac{\partial}{\partial y}$ having 
	the cofactor $k_{*}$.  Following the method used in the proof of the necessity part of Theorem \ref{th-main}, we conclude that $k_{*} $ is a common factor of $f_x$ and $f_y$, i.e., $k_{*}|D$. Therefore, we have $$f=\frac{P_{*} f_x+Q_{*} f_y}{k_{*}}=P_{*}\frac{D}{k_{*}}\frac{f_x}{D}+Q_{*}\frac{D}{k_{*}}\frac{f_y}{D}\in\left\langle \frac{f_x}{D},\frac{f_y}{D}\right\rangle.$$
\end{proof}

\begin{proof}[Proof of Theorem \ref{th-main3}]
The proof is similar to that for the sufficiency part of Theorem \ref{th-main}. Note that in this case, the equation \eqref{eq-main} becomes 
\begin{equation*}
  \frac{f_x}{D_f}(x_0,y_0)X+ \frac{f_y}{D_f}(x_0,y_0)Y+ \frac{f}{D_f}(x_0,y_0)Z=0,	\end{equation*}
  where $D_f$ is the greatest common divisor of $f_x$, $f_y$ and $f$.
By replacing 
 $\mathcal{X}_0$ with  $\mathcal{X}_0/D$, and the set $\Lambda$  with the set  $$\left\{(x,y)\in \mathbb{C}^2: \ \frac{f_y}{D}= \frac{f_x}{D}=0\right\}\cup \left\{(x,y)\in \mathbb{C}^2: \ \frac{f_y}{D_f}= \frac{f_x}{D_f}=\frac{f}{D_f}=0\right\},$$ 
which is also a finite set, one can obtain the conclusion immediately.
\end{proof}

\section{Modules of vector fields on $\mathbb{K}^{n\geq 3}$}\label{sec-nd}

In this section, we will employ the method in the proof of Theorem \ref{th-main} to study one-dimensional foliations admitting invariant hypersurfaces in higher-dimensional spaces.  

Let $f(x_1,...,x_n)$ be a polynomial in $\mathbb{K}[x_1,...,x_n]$,  and $C$ be the algebraic set given by $f=0$. Denote by $V_{f}$  the $\mathbb{K}[x_1,...,x_n]$-module  consisting of the vector fields generating the one-dimensional foliations that admit $C$ as an invariant set. Namely, for each element $\mathcal{X}=\sum_{j=1}^{n}P_j \frac{\partial}{\partial x_j}\in V_f$, there exists a polynomial $k\in \mathbb{K}[x_1,...,x_n]$ such that $$ \sum_{j=1}^{n}P_j f_{x_j} =kf.$$

Note that the subset of $V_f$ consisting of vector fields with cofactor $k\equiv 0$ forms a submodule of $V_f$, denoted by $V_f^0$. 
Clearly,  the minimal generating set of $V_f^0$ has cardinality at least $n-1$, while that of $V_f$ has cardinality at least $n$. Assume  $V_f^0$ contains  $n-1$ vector fields $\mathcal{X}_j=\sum_{m=1}^{n}P_{jm} \frac{\partial}{\partial x_m}$, $j=1,2,...,n-1$.
Let $\mathrm{M}$ be the coefficient matrix of $\{\mathcal{X}_j\}$, i.e., 
$$\mathrm{M}=\left( \begin{matrix}
	P_{11}   &\cdots &P_{(n-1)1} \\
	P_{12}  &\cdots & P_{(n-1)2}\\
	\vdots  & \ddots &\vdots \\
	P_{1n} & \cdots &P_{(n-1)n}
\end{matrix} \right),$$ 
and $M_j$ be the $(n-1)$-minor by deleting the $j$-th row of $\mathrm{M}$, respectively.
Then we have the following result.

\begin{theorem}\label{th-n1}
	If $f$ satisfies the following three conditions:
	\begin{enumerate}
  \item the rank of $\mathrm{M}$ is $n-1$ at each point of $\mathbb{K}^n\setminus \Lambda_0$, where $\Lambda_0$ is an algebraic set of dimension  $\leq n-2$, 
  \item the greatest common divisor of $\{M_1,...,M_n\}$ is $1$,
   
  \item $f\in \langle f_{x_1},...,f_{x_n}\rangle$,
\end{enumerate}
then the $\mathbb{K}[x_1,...,x_n]$-module $V_f$  can be freely generated by $ \mathcal{X}_1,..., \mathcal{X}_{n-1}$ and 
$\mathcal{X}_{*}:=\sum_{m=1}^{n}Q_m \frac{\partial}{\partial x_{m}}\in V_f$, where $\{Q_m\}$ are the coefficients of the decomposition of $f$ in $\langle f_{x_1},...,f_{x_n}\rangle$, i.e., $f=\sum_{m=1}^{n}Q_m f_{x_m}$.
\end{theorem}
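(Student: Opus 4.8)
The plan is to carry over the pointwise linear-algebra scheme of Theorem \ref{th-main}, now with the $n-1$ cofactor-zero generators $\mathcal{X}_1,\dots,\mathcal{X}_{n-1}$ playing the former role of $\mathcal{X}_0$, and to recover polynomiality of the coefficients from the coprimality hypothesis (2) rather than from an analytic extension. First, condition (3) produces the last generator: writing $f=\sum_m Q_m f_{x_m}$ gives $\mathcal{X}_{*}=\sum_m Q_m\frac{\partial}{\partial x_m}\in V_f$ with cofactor $k_{*}=1$. Given an arbitrary $\mathcal{X}=\sum_m P_m\frac{\partial}{\partial x_m}\in V_f$ with cofactor $k$, I attach to each point $p$ the vector $\beta(p)=(P_1(p),\dots,P_n(p),-k(p))\in\mathbb{K}^{n+1}$, which solves the single linear equation $\sum_m f_{x_m}(p)X_m+f(p)Z=0$. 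The columns of $\mathrm{M}$ padded by a final $0$, namely $\beta_j(p)=(P_{j1}(p),\dots,P_{jn}(p),0)$, together with $\beta_{*}(p)=(Q_1(p),\dots,Q_n(p),-1)$, all lie in this solution space $L(p)$.

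Let $\Sigma=\{p: f_{x_1}(p)=\cdots=f_{x_n}(p)=0\}$; by (3) one has $\Sigma\subseteq\{f=0\}$, so the coefficient vector of the equation above vanishes exactly on $\Sigma$, and $\dim L(p)=n$ off $\Sigma$. Off $\Lambda_0$ the columns of $\mathrm{M}$ are independent by (1), and the final coordinate separates $\beta_{*}$ from the $\beta_j$, so $\{\beta_1,\dots,\beta_{n-1},\beta_{*}\}$ is a basis of $L(p)$ on the dense open set $\Omega:=\mathbb{K}^n\setminus(\Lambda_0\cup\Sigma)$ (nonempty since $f$ is non-constant and $\dim\Lambda_0\le n-2$). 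Reading the last coordinate of the decomposition $\beta=\sum_j R_j\beta_j+R_{*}\beta_{*}$ forces $R_{*}=k$, a polynomial. Hence $\mathcal{Y}:=\mathcal{X}-k\,\mathcal{X}_{*}$ has cofactor $0$, i.e. $\mathcal{Y}=\sum_m\tilde P_m\frac{\partial}{\partial x_m}\in V_f^{0}$, and the first $n$ coordinates reduce the problem to solving the overdetermined system $\mathrm{M}\,(R_1,\dots,R_{n-1})^{T}=(\tilde P_1,\dots,\tilde P_n)^{T}$ for polynomial $R_j$.

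The technical heart is this last solve. On each chart $U_i=\{M_i\neq 0\}$, Cramer's rule applied to the $n-1$ rows other than the $i$-th yields a candidate $R_j=D_{ij}/M_i$ with $D_{ij}\in\mathbb{K}[x_1,\dots,x_n]$, regular on $U_i$. At any point of $U_i\cap\Omega$ the full system is consistent with a unique solution, because there $\dim L(p)$ is as expected and the columns of $\mathrm{M}$ span the $(n-1)$-dimensional syzygy space $\{v:\langle v,\nabla f\rangle=0\}$ that contains $(\tilde P_m)$; since the minor $M_i\ne 0$, this unique solution agrees with the Cramer expression. As the charts $U_i\cap\Omega$ cover $\Omega$, the common value defines a regular function $R_j$ on the dense open $\Omega$, which is therefore a single rational function in $\mathbb{K}(x_1,\dots,x_n)$ satisfying $R_j\,M_i=D_{ij}\in\mathbb{K}[x_1,\dots,x_n]$ for every $i$. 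Writing $R_j$ in lowest terms, its denominator divides each $M_i$, hence divides $\gcd(M_1,\dots,M_n)=1$ by (2); so $R_j$ is a polynomial. Then $\mathcal{Y}-\sum_j R_j\mathcal{X}_j$ is a polynomial vector field vanishing on the dense set $\Omega$, hence identically, giving $\mathcal{X}=\sum_j R_j\mathcal{X}_j+k\,\mathcal{X}_{*}$.

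Finally I would record freeness: if $\sum_j A_j\mathcal{X}_j+B\,\mathcal{X}_{*}=0$ with $A_j,B\in\mathbb{K}[x_1,\dots,x_n]$, comparing cofactors (linear by the Remark, with the $\mathcal{X}_j$ contributing $0$ and $\mathcal{X}_{*}$ contributing $1$) forces $B=0$, and then $\sum_j A_j\mathcal{X}_j=0$ forces all $A_j\equiv 0$ because the columns of $\mathrm{M}$ are independent off $\Lambda_0$. I expect the \emph{main obstacle} to be precisely the gluing-and-polynomiality step of the third paragraph: one must use the rank condition (1) both to guarantee consistency and to pin down a unique solution that glues into one regular function on $\Omega$, and simultaneously use the coprimality condition (2) to clear the denominators $M_i$; the hypotheses (1)--(3) are evidently tailored so that these two mechanisms, together with the irreducibility of $\mathbb{K}^n$, replace the Hartogs extension used in the two-dimensional case.
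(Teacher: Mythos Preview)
Your proof is correct and follows the paper's scheme closely: pointwise linear algebra to decompose $\beta(p)$ uniquely over a dense open set, Cramer's rule to exhibit the $R_j$ as rational functions $D_{ij}/M_i$, and condition~(2) to force the common denominator to be trivial. The freeness argument you add is omitted in the paper's terse proof but is exactly what is needed.

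There is one noteworthy difference. The paper still invokes Hartogs' Extension Theorem on $\mathbb{C}^n$ to conclude that $R_{*}$ is a polynomial, after first dividing the linear equation through by $D=\gcd(f_{x_1},\dots,f_{x_n},f)$ so that the exceptional locus has codimension at least $2$. Your observation that the last coordinate of the decomposition immediately gives $R_{*}=k$ bypasses this analytic step entirely and makes the argument purely algebraic over $\mathbb{K}$; in particular you do not need to control the codimension of $\Sigma$, only that it is proper. So while the overall architecture is the same, your handling of $R_{*}$ is strictly more elementary than the paper's, and your closing expectation that Hartogs is \emph{replaced} by (1)--(3) in the paper's higher-dimensional proof is actually not what the paper does --- it is what \emph{you} do.
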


\begin{proof}
Note that the cofactor $k_* $  of $\mathcal{X}_{*}$ is $1$. 
	The proof is similar to the sufficiency part of Theorem \ref{th-main}. 
	Let $\Lambda$ be the union of $\Lambda_0$ and the following set 
	\[\left\{(x_1,...,x_n)\in \mathbb{C}^n: \frac{f_{x_1}}{D}=\cdots =\frac{f_{x_n}}{D}=\frac{f}{D}=0\right\},\]
	where $D$ is the greatest common divisor of $f_{x_1},...,f_{x_n}$ and $f$.
	Clearly,  the dimension  of $\Lambda$ is also not greater than $n-2$.
	At each point of $\mathbb{K}^n\setminus \Lambda$, the vectors $\{(P_{j1},...,P_{jn},0), \ j=1,2,...,n-1\} $ and $(Q_1,...,Q_n,-1)$ form a basis of the solution space of the linear equation 
	\[ \sum_{m=1}^{n} \frac{f_{x_m}}{D} X_m +\frac{f}{D}X_{n+1}=0.\] 
	 For any vector field $\mathcal{X}=\sum_{j=1}^{n}\tilde{P}_j \frac{\partial}{\partial x_j}\in V_f$ with cofactor $\tilde{k}$, we can  uniquely define $n$ functions $ R_{1}, ..., R_{n-1}$ and $R_{*}$,  such that  $\mathcal{X}=\sum_{j=1}^{n-1}R_{j} \mathcal{X}_{j}+ R_{*} \mathcal{X}_{*}$ on $\mathbb{K}^n\setminus \Lambda$.  First,   Hartogs' Extension Theorem on $\mathbb{C}^n$ implies $R_{*}\in \mathbb{K}[x_1,...,x_n]$. Second,  Cramer's rule establishes that the functions $ R_{1}, ..., R_{n-1}$ are rational. Taking $ R_{1}$ as an example, when $M_j\not \equiv 0$ we have $ R_{1}={\tilde{M}_j}/{M_j}$, where $ \tilde{M}_j$ is a determinant whose elements are all polynomials. 
	 Finally, we show that the functions $ R_{1}, ..., R_{n-1}$ are polynomials. 
	 Let $ {M_{j_1},\cdots, M_{j_m}}$ be all the non-vanishing minors of $\mathrm{M}$. 
 The set  $\{p\in \mathbb{K}^n\, |\, {M_{j_1}\cdots M_{j_m}}\neq  0\}$ 
	 is a non-empty open set. Consequently, at each point of this set,  we have
	    \[R_{1}=\frac{\tilde{M}_{j_1}}{M_{j_1}}=\cdots =\frac{\tilde{M}_{j_m}}{M_{{j_m}}}. \]
	  Condition (2)  then implies $R_1$ must be a polynomial. This holds similarly for $ R_2,...,R_{n-1}$.  
\end{proof}
From the proof of the above theorem, one can also obtain that 
 $V_f^0$ is freely generated by $ \mathcal{X}_1,..., \mathcal{X}_{n-1}$ and $V_f=V_f^0 \oplus V_{f}^{*}$, where $V_{f}^{*}=\langle \mathcal{X}_*\rangle$. Theorem \ref{th-n1} may be regarded as the higher-dimensional counterpart of Theorem \ref{th-main3}, while the generalization of Theorem \ref{th-main2} in higher dimensions reads as follows:

\begin{theorem}
	If $V_f=V_f^0 \oplus V_{f}^{*}$, where $V_{f}^{*}$ is a free submodule of rank $1$, then  $f\in \langle f_{x_1}/D,...,f_{x_n}/D\rangle$, where $D$ is the greatest common divisor of $\{f_{x_1},...,f_{x_n}\}$.
	
\end{theorem}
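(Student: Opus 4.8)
The plan is to adapt, verbatim in spirit, the short argument used for Theorem \ref{th-main2} to arbitrary dimension $n$, the key structural fact being that the assignment of a cofactor to a vector field is $\mathbb{K}[x_1,\dots,x_n]$-linear. First I would make explicit the map
\[ \mathrm{cof}\colon V_f \longrightarrow \mathbb{K}[x_1,\dots,x_n], \qquad \mathcal{X} \longmapsto k, \]
sending each $\mathcal{X}\in V_f$ to its cofactor $k$. As noted in the Remark following Theorem \ref{th-main} (the same computation holds in $n$ variables), this map is a module homomorphism, and by the very definition of $V_f^0$ its kernel is exactly $V_f^0$. Since $V_f^*$ is free of rank $1$, I fix a generator $\mathcal{X}_* = \sum_{m=1}^{n} Q_m \frac{\partial}{\partial x_m}$ with cofactor $k_*$, so that $\sum_{m=1}^{n} Q_m f_{x_m} = k_* f$ and every element of $V_f^*$ has the form $R\,\mathcal{X}_*$ with $R\in\mathbb{K}[x_1,\dots,x_n]$. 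Because the sum $V_f=V_f^0\oplus V_f^*$ is direct we have $V_f^*\cap V_f^0=\{0\}$, and as $\mathcal{X}_*\neq 0$ this forces $k_*\neq 0$.

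Next I would feed the trivial generators into the decomposition. For each index $i$ the vector field $f\,\frac{\partial}{\partial x_i}$ lies in $V_f$ with cofactor $f_{x_i}$; writing its unique decomposition
\[ f\,\frac{\partial}{\partial x_i} = \mathcal{Y}_i + R_i\,\mathcal{X}_*, \qquad \mathcal{Y}_i\in V_f^0,\ R_i\in\mathbb{K}[x_1,\dots,x_n], \]
and applying $\mathrm{cof}$, I obtain $f_{x_i}=R_i\,k_*$. Hence $k_*$ divides $f_{x_i}$ for every $i$, and therefore $k_*$ divides $D=\gcd(f_{x_1},\dots,f_{x_n})$ in the unique factorization domain $\mathbb{K}[x_1,\dots,x_n]$; in particular $D/k_*$ is a polynomial.

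Finally I would rewrite the defining relation of $\mathcal{X}_*$ as
\[ f = \frac{1}{k_*}\sum_{m=1}^{n} Q_m f_{x_m} = \sum_{m=1}^{n} \left( Q_m \frac{D}{k_*}\right)\frac{f_{x_m}}{D}, \]
where each quotient $f_{x_m}/D$ and each coefficient $Q_m D/k_*$ is a polynomial, exhibiting $f$ as an element of $\langle f_{x_1}/D,\dots,f_{x_n}/D\rangle$, as required. I expect no genuine obstacle in this direction: the argument is essentially formal once the cofactor homomorphism is in hand, and notably it requires none of the hypotheses (1)--(3) of Theorem \ref{th-n1} nor any Hartogs-type extension (unlike the sufficiency direction). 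The only point that deserves care is the divisibility $k_*\mid f_{x_i}$; this rests entirely on the \emph{uniqueness} of the direct-sum decomposition applied to the trivial vector fields $f\,\frac{\partial}{\partial x_i}$, and on the fact that $k_*\neq 0$, both of which I have isolated above.
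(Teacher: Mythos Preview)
Your proposal is correct and follows essentially the same route as the paper: feed the trivial vector fields $f\,\partial/\partial x_i$ into the decomposition $V_f=V_f^0\oplus V_f^*$, compare cofactors to obtain $f_{x_i}=R_i k_*$ and hence $k_*\mid D$, then rewrite $f=\sum Q_m(D/k_*)(f_{x_m}/D)$. The paper's proof is a one-line reference back to the necessity argument of Theorem~\ref{th-main} (via Theorem~\ref{th-main2}); you have simply spelled out that argument in full, including the explicit identification of the cofactor map as a module homomorphism with kernel $V_f^0$ and the observation that $k_*\neq 0$.
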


\begin{proof}
	The proof is similar to the necessity part of Theorem \ref{th-main}, by observing that $ f\frac{\partial}{\partial x_j}$ belongs to $ V_f$ and has cofactor  $ f_{x_j}$.
\end{proof}




\begin{example}
For any characteristic $0$ subfield $\mathbb{K}\subseteq \mathbb{C}$ and  any $1<n\in \mathbb{Z}$,	let $f(x_1,\cdots,x_n)=x_1 +g(x_2,\cdots,x_n)$, where $g$ is an arbitrary polynomial in $\mathbb{K}[x_2,\cdots,x_n]$. Then $f$ satisfies the conditions of Theorem \ref{th-n1}, and $V_f$ is freely generated by $\{ \mathcal{X}_j=-\frac{\partial g}{\partial x_j}\frac{\partial}{\partial x_1} +\frac{\partial}{\partial x_j }, \ j=2, \cdots,n\}$ and  $ \mathcal{X}_*=f\frac{\partial}{\partial x_1} $.
\end{example}




	\section*{Acknowledgement}
We are grateful to Dr. Yuzhou Tian and Dr. Shaoqing Wang for their valuable comments.




\end{document}